\newtheorem{lemma}{Lemma}
\newtheorem{theorem}{Theorem}
\newtheorem{remark}{Remark}
\newtheorem{Proposition}{Proposition}
\begin{document}
\leftline{ \scriptsize \it }

\title[Moment Estimates]{Moment Estimations of new \\Sz\'asz-Mirakyan-Durrmeyer operators}
\maketitle

\begin{center}
{\bf Vijay Gupta} \\
Department of Mathematics, \\
Netaji Subhas Institute of Technology \\
Sector 3 Dwarka, New Delhi-110078, India \\
vijaygupta2001@hotmail.com \\
\vskip0.2in
{\bf G. C. Greubel} \\
Newport News, VA, United States \\
jthomae@gmail.com
\end{center}

\begin{abstract}
In \cite{J} Jain introduced the modified form of the Sz\'asz-Mirakjan operator, based on certain parameter $0 \leq 
\beta < 1$. Several modifications of the operators proposed and are available in the literature. Here we consider 
actual Durrmeyer variants of the operators due to Jain. It is observed here that the Durrmeyer variant have nice 
properties and one need not to take any restriction on $\beta$ in order to obtain convergence. We establish
moments using the Tricomi's confluent hypergeometric function and Stirling numbers of first kind, and also
estimate some direct results.
\end{abstract}

\vspace{3mm}
\textbf{Key words}
Sz\'asz-Mirakjan operator, confluent hypergeometric function, Stirling numbers, direct results, modulus of continuity.

\section{Introduction}
The well known Sz\'{a}sz-Mirakyan operators are defined as
\begin{align*}
B_{n}^{0}(f,x) = \sum_{k=0}^\infty e^{-nx}\frac{(nx)^k}{k!} f(k/n), \hspace{10mm} x \in [0,\infty).
\end{align*}
In order to generalize the Sz\'asz-Mirakyan operators Jain, \cite{J}, introduced the following operators
\begin{align}\label{e1}
B_{n}^{\beta}(f,x) = \sum_{k=0}^\infty L_{n,k}^{(\beta)}(x) f(k/n), \hspace{10mm} x \in [0,\infty)
\end{align}
where $0 \leq \beta<1$ and the basis function is defined as
\begin{align*}
L_{n,k}^{(\beta)}(x) = \frac{nx(nx+k\beta)^{k-1}}{k!}e^{-(nx+k\beta)}
\end{align*}
where it is seen that $\sum_{k=0}^{\infty}L_{n,k}^{(\beta)}(x)=1$. As a special case when $\beta=0$,
the operators (\ref{e1}) reduce to the Sz\'{a}sz-Mirakyan operators. Umar and Razi, \cite{UR}, used
a Kantorovich type modification of $L_{n,k}^{(\beta)}(x)$ in order to approximate integrable functions,
where some direct estimates were considered. Recently Farca\c{s}, \cite{af}, studied the operators 
(\ref{e1}) and estimated a Voronovskaja type asymptotic formula. While a review of Farca\c{s}' work 
was undertaken it was found that minor errors were given in Lemma 2.1 . These errors have been corrected 
and are given in Lemma 1.

In 1967 Durrmeyer, \cite{Dur}, introduced the integral modification of the well known Bernstein polynomials, 
which were later studied by Derriennic \cite{der}, Gonska-Zhou \cite{gon1},\cite{gon2} and Agrawal-Gupta \cite{pna}.  
The Durrmeyer type modification of the operators (\ref{e1}), with different weight functions, have been
proposed by Tarabie \cite{st} and Gupta et al \cite{vg1}. In approximations by linear positive operators, moment
estimations play an important role. So far no standard Durrmeyer type modification of the operators (\ref{e1}) have
been discussed due to its complicated form in finding moments and this problem has not been discussed in the last 
four decades. Here we overcome this difficulty and we consider the following Durrmeyer variant of the operators
(\ref{e1}) in the form
\begin{align}\label{e2}
D_{n}^{\beta}(f,x) &= \sum_{k=0}^{\infty}\left(\int_0^\infty L_{n,k}^{(\beta)}(t)\,dt \right)^{-1}L_{n,k}^{(\beta)}
(x)\int_{0}^{\infty}L_{n,k}^{(\beta)}(t)f(t)\,dt \nonumber\\
&= \sum_{k=0}^{\infty} \frac{< L_{n,k}^{(\beta)}(t), f(t) >}{< L_{n,k}^{(\beta)}(t), 1 >} \, L_{n,k}^{(\beta)}(x)
\end{align}
where $< f,g> = \int_{0}^{\infty} f(t)g(t)dt$. For the special case of $\beta = 0$ these operators reduce to the 
Sz\'asz-Mirakyan-Durrmeyer operators (see \cite{vgrp} and references therein). It has been observed that 
these operators have interesting convergence properties. In the original form of the operators (\ref{e1}) 
and its other integral modifications, one has to consider the restriction that $\beta \to 0$ as $n \to \infty$, 
in order to obtain convergence. For these actual Durrmeyer variants, (\ref{e2}), we need not to take any 
restrictions on $\beta$. Because of this beautiful property it is of worth to study these operators. Here
we find moments using Stirling numbers of first kind and confluent hypergeometric function and estimate some
basic direct results.

\section{Moments}

\begin{lemma} \cite{J}, \cite{af} \label{l1}
For the operators defined by (\ref{e1}) the moments are as follows:
\begin{align}
B_{n}^{\beta}(1,x) &= 1, \hspace{10mm} B_n^{\beta}(t,x)=\frac{x}{1-\beta} \nonumber\\
B_{n}^{\beta}(t^{2},x) &= \frac{x^2}{(1-\beta)^2}+\frac{x}{n(1-\beta)^3}, \nonumber\\
B_{n}^{\beta}(t^{3},x) &= \frac{x^3}{(1-\beta)^3}+\frac{3 \, x^2}{n(1-\beta)^4}+\frac{(1+2\beta)\, x}
{n^2(1-\beta)^5} \label{e3} \\ 
B_{n}^{\beta}(t^{4},x) &= \frac{x^4}{(1-\beta)^4}+\frac{6 \, x^3}{n(1-\beta)^5}+\frac{(7+8\beta) x^2}
{n^2(1-\beta)^6}  +\frac{(6\beta^2 + 8\beta +1) x}{n^3(1-\beta)^7} \nonumber\\
B_{n}^{\beta}(t^{5}, x) &= \frac{x^{5}}{(1-\beta)^{5}} + \frac{10 \, x^{4}}{n(1-\beta)^{6}} + \frac{
5(4 \beta + 5) \, x^{3}}{n^{2}(1-\beta)^{7}} \nonumber\\
& \hspace{10mm} + \frac{15(2 \beta^{2} + 4 \beta + 1) \, x^{2}}{ n^{3}
(1-\beta)^{8}} + \frac{(24\beta^{3} + 58 \beta^{2} + 22 \beta + 1) \, x}{n^{4}(1-\beta)^{9}} \nonumber
\end{align}
\end{lemma}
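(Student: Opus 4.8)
The plan is to compute the moments $B_n^\beta(t^m,x)$ for $m=0,1,2,3,4,5$ directly from the defining series
\begin{align*}
B_n^\beta(t^m,x) = \sum_{k=0}^\infty L_{n,k}^{(\beta)}(x)\,(k/n)^m = \frac{1}{n^m}\sum_{k=0}^\infty \frac{nx(nx+k\beta)^{k-1}}{k!}e^{-(nx+k\beta)}\,k^m.
\end{align*}
The key analytic tool will be an Abel-type summation identity for the Jain basis. Setting $u=nx$ and treating the factor $e^{-k\beta}(nx+k\beta)^{k-1}$, one recognizes the generating functions associated with the tree function / Lambert $W$ expansion, namely the Abel binomial identities $\sum_{k\ge 0}\binom{m}{k}u(u+k\beta)^{k-1}(v+(m-k)\beta)^{m-k}=(u+v+m\beta)^m$ and its relatives. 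These identities let one evaluate sums of the form $\sum_k \frac{(u+k\beta)^{k-1}}{k!}e^{-k\beta}\,k^{\underline{j}}$ in closed form, where $k^{\underline{j}}$ is a falling factorial; the falling factorials are preferable to ordinary powers because the factor $k!$ in the denominator telescopes cleanly against $k^{\underline j}=k!/(k-j)!$.

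First I would establish the normalization $B_n^\beta(1,x)=1$, which is just the stated fact $\sum_k L_{n,k}^{(\beta)}(x)=1$ and follows from the Abel identity at $m=0$. Next I would compute the raw power sums $S_j(u):=\sum_{k\ge 0}\frac{u(u+k\beta)^{k-1}}{k!}e^{-(u+k\beta)}k^{\underline j}$ for $j=1,\dots,5$ by differentiating the Abel generating identity with respect to a dummy variable (or equivalently by shifting the index $k\mapsto k+j$ and reapplying the $m=0$ identity to the shifted sum). One finds $S_j(u)=\dfrac{u}{(1-\beta)^{2j-1}}\cdot(\text{polynomial in }u,\beta)$; in particular $S_1(u)=u/(1-\beta)$. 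Then I would convert ordinary powers to falling factorials via Stirling numbers of the second kind, $k^m=\sum_{j=0}^m \genfrac\{\}{0pt}{}{m}{j}k^{\underline j}$, assemble $B_n^\beta(t^m,x)=n^{-m}\sum_j \genfrac\{\}{0pt}{}{m}{j}S_j(nx)$, and substitute $u=nx$. Collecting terms by powers of $x$ and simplifying the $\beta$-polynomials should reproduce the five displayed formulas in \eqref{e3}.

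The main obstacle will be the careful evaluation and bookkeeping of the higher sums $S_3,S_4,S_5$, where the Abel identities must be differentiated or index-shifted several times and the emerging $\beta$-dependent coefficients (such as $24\beta^3+58\beta^2+22\beta+1$) must be tracked without algebraic slips. A secondary subtlety is justifying convergence and the termwise manipulations: for $0\le\beta<1$ the series and all its $k^{\underline j}$-weighted variants converge absolutely (since $(u+k\beta)^{k-1}e^{-k\beta}/k!$ decays like the terms of a convergent tree-function series when $\beta<1$), so differentiation under the summation sign and reindexing are legitimate. Because the statement is quoted from \cite{J} and \cite{af}, with the stated purpose of correcting minor errors in Farca\c{s}' Lemma~2.1, I would finally cross-check each computed coefficient against the recursion $B_n^\beta(t^{m+1},x)$ can be related to $B_n^\beta(t^m,x)$ through the structure of the basis, which provides an independent consistency test on the corrected values.
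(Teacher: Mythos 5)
Your proposal cannot be compared to an in-paper proof for the simple reason that the paper gives none: Lemma \ref{l1} is imported verbatim from Jain \cite{J} and Farca\c{s} \cite{af} (with the authors correcting minor errors in the latter's Lemma 2.1), so the paper's ``proof'' is citation. What you have written is essentially a reconstruction of the classical derivation that those references use: expand $k^m$ in falling factorials via Stirling numbers of the second kind, exploit $k^{\underline{j}}/k! = 1/(k-j)!$ to shift the index, and evaluate the resulting series through Abel--Jensen-type identities attached to the tree function $T(z)=\sum_{k\geq 1}k^{k-1}z^k/k!$, evaluated at $z=\beta e^{-\beta}$ where $T=\beta$. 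Your structural claims check out: for instance, with $u=nx$ one finds $S_1(u)=\frac{u}{1-\beta}$ and $S_2(u)=u\left[\frac{u+\beta}{(1-\beta)^2}+\frac{\beta}{(1-\beta)^3}\right]$, and then $n^{-2}\bigl(S_2(nx)+S_1(nx)\bigr)$ collapses exactly to $\frac{x^2}{(1-\beta)^2}+\frac{x}{n(1-\beta)^3}$ as stated in \eqref{e3}; likewise your denominator bound $(1-\beta)^{2j-1}$ for $S_j$ and your convergence justification (the terms decay geometrically with ratio $\beta e^{1-\beta}<1$ for $0\leq\beta<1$) are both correct.

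Two imprecisions are worth repairing before this counts as a complete proof. First, the parenthetical claim that one may ``shift the index $k\mapsto k+j$ and reapply the $m=0$ identity'' is not literally true: after the shift the exponent becomes $k+j-1$ rather than $k-1$, so the basic Abel/Jensen identity no longer applies directly; you need the family of higher identities $\sum_{k\geq 0}\frac{(y+k)^{k+s}}{k!}z^k$ for $s\geq 0$ (e.g.\ $\sum_{k}\frac{(y+k)^{k+1}}{k!}z^k=e^{yT}\left[\frac{y-1}{(1-T)^2}+\frac{1}{(1-T)^3}\right]$), which must be generated by differentiating the basic identity with respect to $z$ --- this is your primary stated route, and it does work. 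Second, the Abel identity you display is a finite polynomial identity; the computation actually runs on its infinite-series (Jensen/Lagrange) analogues, so the appeal should be to those. With these repairs your plan goes through and, carried out for $j\leq 5$, reproduces all five displayed formulas --- including the corrected coefficients such as $24\beta^3+58\beta^2+22\beta+1$, whose verification against \cite{af} is precisely the contribution the present paper claims for this lemma.
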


\begin{lemma} \label{l2} For $0\le \beta<1$, we have
\begin{align}
\frac{< L_{n,k}^{(\beta)}(t), t^{r} >}{< L_{n,k}^{(\beta)}(t), 1 >} = P_{r}(k; \beta) \label{e4}
\end{align}
where $< f,g> =\int_{0}^{\infty} f(t)g(t)dt$ and $P_{r}(k; \beta)$ is a polynomial of order $r$ in the variable $k$.
In particular
\begin{align}
P_{0}(k; \beta) &= 1 \nonumber\\
P_{1}(k; \beta) &= \frac{1}{n} \left[ (1-\beta) k + \frac{1}{1-\beta} \right], \nonumber\\
P_{2}(k; \beta) &= \frac{1}{n^{2}} \left[ (1-\beta)^{2} k^{2} + 3 k + \frac{2!}{1-\beta} \right], \label{e5} \\
P_{3}(k; \beta) &= \frac{1}{n^{3}} \left[ (1-\beta)^{3} k^{3} + 6(1-\beta) k^{2} + 
\frac{(11-8\beta) \, k}{1-\beta} + \frac{3!}{1-\beta} \right], \nonumber\\
P_{4}(k; \beta) &= \frac{1}{n^{4}} \left[ (1-\beta)^{4} k^{4} + 10 (1-\beta)^{2} k^{3} + 5(7-4\beta) k^{2} 
+ \frac{10(5-3\beta) \, k}{1-\beta} + \frac{4!}{1-\beta} \right] \nonumber\\
P_{5}(k;\beta) &= \frac{1}{n^{5}} \left[ (1-\beta)^{5} k^{5} + 15 (1-\beta)^{3} k^{4} + 5 (1-\beta)(17 - 8 \beta) k^{3}
\right. \nonumber\\
& \hspace{20mm} \left. + \frac{15(15-20\beta + 6\beta^{2}) \, k^{2}}{1-\beta} + \frac{(274-144\beta) \, k}{1-\beta}
+ \frac{5!}{1-\beta} \right] \nonumber
\end{align}
\end{lemma}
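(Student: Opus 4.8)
The plan is to evaluate the two inner products in (\ref{e4}) in closed form and then simplify their quotient, the normalising factors being exactly what make the answer clean. First I would insert the explicit basis function, pull the constant $e^{-k\beta}$ out of the integral, and use the substitution $u=nt+k\beta$. This turns every inner product of this kind into one of the single type
\begin{align*}
J_s=\int_{k\beta}^{\infty}(u-k\beta)^{s}\,u^{k-1}e^{-u}\,du ,
\end{align*}
so that $\langle L_{n,k}^{(\beta)},t^{r}\rangle=(n^{r+1}k!)^{-1}J_{r+1}$ and in particular $\langle L_{n,k}^{(\beta)},1\rangle=(n\,k!)^{-1}J_{1}$. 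Hence the factors $k!$ and the awkward $e^{-k\beta}$ cancel at once and
\begin{align*}
P_{r}(k;\beta)=\frac{1}{n^{r}}\,\frac{J_{r+1}}{J_{1}} ,
\end{align*}
so only the ratio of the two truncated integrals has to be understood.

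Next I would identify $J_{s}$ with Tricomi's confluent hypergeometric function. The rescaling $u=k\beta(1+v)$ brings $J_{s}$ to the Laplace-type integral $\int_{0}^{\infty}v^{s}(1+v)^{k-1}e^{-k\beta v}\,dv$ times an explicit prefactor, which is precisely the integral representation of $U$; this gives $J_{s}=(k\beta)^{k+s}e^{-k\beta}\,s!\,U(s+1,s+k+1,k\beta)$ and therefore
\begin{align*}
P_{r}(k;\beta)=\frac{(r+1)!\,(k\beta)^{r}}{n^{r}}\,\frac{U(r+2,r+k+2,k\beta)}{U(2,k+2,k\beta)} .
\end{align*}
Since the parameter difference here is the non-positive integer $1-k$, the Kummer relation $U(a,b,z)=z^{1-b}U(a-b+1,2-b,z)$ turns each $U$ into a terminating series whose terms carry the Pochhammer symbols $(s+1)_{j}$ and $(1-k)_{j}$. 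The products $(1-k)_{j}$, equivalently $(k-1)(k-2)\cdots(k-j)$, are then re-expanded as polynomials in $k$ via the Stirling numbers of the first kind, exactly as the abstract announces; collecting powers of $k$ should yield a polynomial of degree $r$, after which I would read off the coefficients for $r=0,\dots,5$ to match (\ref{e5}).

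The genuine obstacle I anticipate is this last step: showing that the quotient of the two confluent functions is \emph{exactly} a polynomial of degree $r$ in $k$. The terminating series for $U$ are finite sums running up to $j=k-1$, and a ratio of two such truncated sums is not a polynomial in $k$ a priori, so the degree-$r$ collapse must come from substantial cancellation after the Stirling-number reorganisation, which amounts to replacing each terminating Tricomi series by the corresponding full confluent series and verifying that the discarded tail leaves the polynomial part untouched. This is simultaneously the heart of the argument and the place where sign and indexing slips are easiest to commit, so I would first treat $r=1$ and $r=2$ as explicit computational checks before trusting the general pattern, and only then assemble the general statement together with the list $P_{0},\dots,P_{5}$.
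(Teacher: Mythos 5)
Your reduction to a ratio of Tricomi functions is sound and coincides, up to bookkeeping, with how the paper itself begins: your identity $J_{s}=(k\beta)^{k+s}e^{-k\beta}\,s!\,U(s+1,s+k+1,k\beta)$ is precisely the paper's (\ref{e6}), and your plan to expand terminating series through Stirling numbers of the first kind parallels the paper's second computation. But the step you yourself single out as the obstacle --- the exact collapse of $U(r+2,k+r+2,k\beta)/U(2,k+2,k\beta)$ to a polynomial of degree $r$ in $k$ --- is a genuine gap, and it cannot be closed, because for $0<\beta<1$ the ratio in (\ref{e4}) is \emph{not} a polynomial in $k$ and the displayed formulas (\ref{e5}) are not exact. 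Take $k=1$, where $L_{n,1}^{(\beta)}(t)=nt\,e^{-nt-\beta}$; then for every $\beta$
\begin{align*}
\frac{\langle L_{n,1}^{(\beta)}(t),\,t^{r}\rangle}{\langle L_{n,1}^{(\beta)}(t),\,1\rangle}
=\frac{(r+1)!}{n^{r}},
\end{align*}
which you can also read off from your own formula via $U(a,a+1,z)=z^{-a}$. The lemma instead predicts, e.g., $P_{1}(1;\beta)=\frac{1}{n}\bigl[(1-\beta)+\frac{1}{1-\beta}\bigr]$, which equals $\frac{2}{n}$ only when $\beta=0$. Moreover, your representation shows that the ratio does converge to the stated polynomial as $k\to\infty$ (the truncation tails are exponentially small for $x=\beta k$, $\beta<1$), so no degree-$r$ polynomial whatsoever can reproduce the ratio for all $k$: the ``substantial cancellation'' you hoped for is only approximate.

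You should also know that the paper's own proof founders at exactly this point, just less visibly. It reduces everything to the quotients $S_{r}(x)=\theta_{r}(x)/\theta_{0}(x)$ of truncated sums and then asserts the closed forms (\ref{e11}), e.g.\ $S_{1}(x)=k-x+\frac{x}{k-x}$. These are not identities: for $k=2$ one has $S_{1}(x)=\frac{4+x}{2+x}\neq 2-x+\frac{x}{2-x}$. They become correct only after the truncated sums $\sum_{s=0}^{k-1}$ are replaced by full exponential series, i.e.\ modulo errors that are exponentially small in $k$. Consequently (\ref{e5}) is exact at $\beta=0$ (where it gives the familiar $P_{r}(k;0)=(k+r)!/(k!\,n^{r})$) and otherwise holds only asymptotically in $k$; a correct write-up must either restate the lemma in that asymptotic form or restrict to $\beta=0$. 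Your stated intention to verify $r=1,2$ explicitly was the right instinct --- carried out at small $k$ it would have exposed the failure immediately.
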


\begin{proof} First, we consider the integral:
\begin{align*}
<L_{n,k}^{(\beta)}(t), t^{r} > &= \int_{0}^{\infty} L_{n,k}^{(\beta)}(t) \, t^{r} \, dt  \\
&= \frac{n}{k!}\int_0^\infty e^{-(nt+k\beta)}t^{r+1}(nt+k\beta)^{k-1}dt
\end{align*}
We use Tricomi's confluent hypergeometric function:
\begin{align*}
U(a,b,c) = \frac{1}{\Gamma(a)}\int_0^\infty e^{-ct}t^{a-1}(1+t)^{b-a-1},a>0,c>0
\end{align*}
we have
\begin{align}
<L_{n,k}^{(\beta)}(t), t^{r} > &= \frac{n}{k!}\int_0^\infty e^{-(nt+k\beta)}t^{r+1}(nt+k\beta)^{k-1}dt \nonumber\\
&= \frac{1}{k!}\int_0^\infty (x+k\beta)^{k-1}e^{-(x+k\beta)}\left(\frac{x}{n}\right)^{r+1}dx \nonumber\\
&= \frac{(k\beta)^{k+r+1}}{k!n^{r+1}}e^{-k\beta}\int_0^\infty e^{-k\beta t}(1+t)^{k-1}t^{r+1}dt \nonumber\\
&= \frac{(k\beta)^{k+r+1}}{k!n^{r+1}}e^{-k\beta}(r+1)!U(r+2,k+r+2,k\beta). \label{e6}
\end{align}
The evaluation of $<L_{n,k}^{(\beta)}(t), t^{r} >$ can also be seen in the form
\begin{align*}
<L_{n,k}^{(\beta)}(t), t^{r} > &= \frac{(r+1)!}{k \, n^{r+1}} \, e^{-k \beta} \, \sum_{s=0}^{k-1} \binom{k+r-s}{r+1}
\frac{(k\beta)^{s}}{s!} \nonumber\\
&= \frac{e^{-x}}{k n^{r+1}} \, \sum_{s=0}^{k-1} \phi_{r}(s) \frac{x^{s}}{s!}
\end{align*}
where $x = \beta k$ and $\phi_{r}(s)$ is given by
\begin{align}
\phi_{r}(s) &= (k-s)_{r+1} = \sum_{j=0}^{r+1} s(r+1, r-j+1) (k-s)^{r-j+1} \label{e7}
\end{align}
where $s(n,k)$ are the Stirling numbers of the first kind. The first few may be written as
\begin{align*}
\phi_{0} &= k-s \\
\phi_{1} &= (k-s)^{2} + (k-s) \\
\phi_{2} &= (k-s)^{3} + 3 (k-s)^{2} + 2(k-s) \\
\phi_{3} &= (k-s)^{4} + 6 (k-s)^{3} + 11 (k-s)^{2} + 6 (k-s)
\end{align*}
It can now be determined that
\begin{align}
<L_{n,k}^{(\beta)}(t), t^{r} > = \frac{e^{-x}}{k n^{r+1}} \, \sum_{j=0}^{r+1} s(r+1, r-j+1) \theta_{r-j}(x) \label{e8}
\end{align}
where
\begin{align*}
\theta_{m}(x) = \sum_{s=0}^{k-1} (k-s)^{m+1} \frac{x^{s}}{s!}.
\end{align*}
For the case of $r=0$, (\ref{e8}) becomes
\begin{align*}
< L_{n,k}^{(\beta)}(t), 1 > = \frac{e^{-x}}{k n} \, \theta_{0}(x) = \frac{e^{-x}}{k n} \, \sum_{s=0}^{k-1} (k-s)
\frac{x^{s}}{s!}
\end{align*}
and for the case $r=1$,
\begin{align*}
< L_{n,k}^{(\beta)}(t), t > = \frac{e^{-x}}{k n^{2}} \, \theta_{1}(x) + \frac{1}{n} < L_{n,k}^{(\beta)}(t), 1 >.
\end{align*}
Dividing both sides by $< L_{n,k}^{(\beta)}(t), 1 >$ leads to the expression
\begin{align}
P_{1}(k; \beta) = \frac{1}{n} \left( 1 + \frac{ \theta_{1}(x) }{ \theta_{0}(x) } \right) = \frac{1}{n}
( 1 + S_{1}(x) ) \nonumber 
\end{align}
where $S_{r}(x)$ is defined by
\begin{align}
S_{r}(x) = \frac{\theta_{r}(x)}{\theta_{0}(x)} = \frac{ \sum_{s=0}^{k-1} (k-s)^{r+1} \frac{x^{s}}{s!} }{ \sum_{s=0}^{k-1}
(k-s) \frac{x^{s}}{s!} }. \label{e9}
\end{align}
The general form of $P_{r}(k; \beta)$ is given by
\begin{align}
P_{r}(k; \beta) = \frac{1}{n^{r}} \, \sum_{j=0}^{r+1} s(r+1, j) S_{j-1}(x). \label{e10}
\end{align}
What remains is to obtain calculations for $S_{r}(x)$. From (\ref{e9}) it is seen that $S_{0}(x) = 1$
and
\begin{align}
S_{1}(x) &= k - \left( \frac{k-1}{k} \right) x +  \left(\frac{x}{k}\right)^{2} +
\left(\frac{x}{k}\right)^{3} + \left(\frac{x}{k}\right)^{4} + \cdots
= k - x + \frac{x}{k-x}  \nonumber\\
S_{2}(x) &= x^{2} - (2k-3) x + k^{2} - \frac{x}{k-x}, \label{e11} \\
S_{3}(x) &= k^{3} - 3k -1 - (3k^{2}- 6k + 7) x + 3(k-2)x^{2} - x^{3} + \frac{k(3k+1)}{k-x}, \nonumber\\
S_{4}(x) &= k^{4} - (4k^{3} - 10k^{2} + 10k -15) x + (6k^{2} -20k + 25) x^{2} \nonumber\\
& \hspace{20mm} - 2(2k-5) x^{3} + x^{4} - \frac{(10k+1)x}{k-x}. \nonumber
\end{align}
Since $x= \beta k$ then the first few $S_{r}(\beta k)$ are seen to be
\begin{align}
S_{1}(\beta k) &= (1-\beta) k + \frac{\beta}{1-\beta} \nonumber\\
S_{2}(\beta k) &= (1-\beta)^{2} k^{2} + 3 \beta k - \frac{\beta}{1-\beta} \label{e12} \\
S_{3}(\beta k) &= (1-\beta)^{3} k^{3} + 6 \beta (1-\beta) k^{2} + \frac{\beta(7\beta-4) \, k}{1-\beta} +
\frac{\beta}{1-\beta} \nonumber\\
S_{4}(\beta k) &= (1-\beta)^{4} k^{4} + 10 \beta (1-\beta)^{2} k^{3} + 5\beta (5 \beta -2) k^{2}
+ \frac{5 \beta (1-3\beta) \, k}{1-\beta} - \frac{\beta}{1-\beta} \nonumber
\end{align}
which are polynomials of order $r$ in the variable $k$. Using the resulting expressions of $S_{r}(\beta k)$,
provided in (\ref{e12}), in (\ref{e10}) lead to the $P_{r}(k; \beta)$ polynomials of (\ref{e5}). It is
now sufficient to conclude that
\begin{align*}
\frac{< L_{n,k}^{(\beta)}(t), t^{r} >}{< L_{n,k}^{(\beta)}(t), 1 >} = P_{r}(k; \beta)
\end{align*}
are polynomials of order $r$ in the variable $k$.
\end{proof}

\begin{lemma}\label{l3}
For $0 \leq \beta < 1$, $r \geq 0$, the polynomials $P_{r}(k; \beta)$ satisfy the recurrence relationship 
\begin{align}
n^{2} P_{r+2}(k; \beta) = n [ (1-\beta) k + r + 2 ] P_{r+1}(k; \beta) + (r+2) \beta k P_{r}(k; \beta). \label{e13}
\end{align}
\end{lemma}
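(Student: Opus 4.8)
The plan is to establish the recurrence first at the level of the \emph{unnormalized} moments
\[
M_{r} := \; < L_{n,k}^{(\beta)}(t), t^{r} > ,
\]
and then divide through by $M_{0} = \; < L_{n,k}^{(\beta)}(t), 1 >$. Since by definition $P_{r}(k;\beta) = M_{r}/M_{0}$ and the quantity $M_{0}$ does not depend on $r$, a recurrence among $M_{r+2}, M_{r+1}, M_{r}$ divides down term-by-term to the desired recurrence among $P_{r+2}, P_{r+1}, P_{r}$. Thus it suffices to prove
\[
n^{2} M_{r+2} = n\,[(1-\beta)k + r + 2]\,M_{r+1} + (r+2)\,\beta k\,M_{r}.
\]

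First I would put $M_{r}$ into a convenient integral form. Starting from the definition and substituting $x = nt$ (exactly the step already used to reach (\ref{e6})), one obtains
\[
M_{r} = \frac{e^{-k\beta}}{k!\,n^{r+1}} K_{r}, \qquad K_{r} := \int_{0}^{\infty} e^{-x}(x+k\beta)^{k-1} x^{r+1}\,dx .
\]
The whole problem then reduces to a three-term recurrence among $K_{r+2}, K_{r+1}, K_{r}$, which I would extract by integrating the exact derivative
\[
\frac{d}{dx}\Bigl[ e^{-x}(x+k\beta)^{k} x^{r+2} \Bigr]
\]
over $[0,\infty)$. The boundary contributions vanish (at $x=0$ because of the factor $x^{r+2}$ with $r+2 \geq 2$, and at $+\infty$ because of the exponential decay), so the integral of this derivative is zero. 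Carrying out the product rule yields three integrals: one with $(x+k\beta)^{k} x^{r+2}$, one with $(x+k\beta)^{k-1} x^{r+2}$ (which is already $K_{r+1}$), and one with $(x+k\beta)^{k} x^{r+1}$.

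To match the two stray integrals to the $K_{r}$ family I would rewrite each factor $(x+k\beta)^{k} = x(x+k\beta)^{k-1} + k\beta(x+k\beta)^{k-1}$, which splits them as $K_{r+2} + k\beta K_{r+1}$ and $K_{r+1} + k\beta K_{r}$ respectively. Collecting terms gives
\[
K_{r+2} = [(1-\beta)k + r + 2]\,K_{r+1} + (r+2)\,\beta k\,K_{r}.
\]
Finally, using $K_{r} = k!\,n^{r+1} e^{k\beta} M_{r}$ to translate back, multiplying by the appropriate power of $n$ converts this into the stated relation for $M_{r}$, and division by $M_{0}$ produces (\ref{e13}). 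The only point requiring genuine care is the bookkeeping of the power shifts when replacing $(x+k\beta)^{k}$ by $x(x+k\beta)^{k-1} + k\beta(x+k\beta)^{k-1}$, so that each resulting integral is correctly identified with its member of the $K_{r}$ family; this is where an index slip would most easily creep in, and everything else is a direct computation. As an alternative I note that the same recurrence could be read off from a contiguous relation for the Tricomi function $U(r+2, k+r+2, k\beta)$ via (\ref{e6}), but the integration-by-parts route is self-contained and avoids invoking special-function identities.
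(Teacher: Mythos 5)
Your proof is correct, but it takes a genuinely different route from the paper's. The paper proves Lemma~\ref{l3} in one step: it applies the contiguous relation $U(a,b;z) = (a+1)\,z\,U(a+2,b+2;z) + (z-b)\,U(a+1,b+1;z)$ for the Tricomi function to the representation (\ref{e6}) and then divides by $< L_{n,k}^{(\beta)}(t), 1 >$. You instead derive the three-term recurrence for the unnormalized moments from scratch, by integrating the exact derivative $\frac{d}{dx}\bigl[ e^{-x}(x+k\beta)^{k} x^{r+2} \bigr]$ over $[0,\infty)$ (the boundary terms do vanish as you say) and splitting $(x+k\beta)^{k} = x(x+k\beta)^{k-1} + k\beta(x+k\beta)^{k-1}$ so that every integral is identified within the family $K_{r}$. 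I checked the bookkeeping you flagged as the delicate point: the two splits give $K_{r+2} + k\beta K_{r+1}$ and $K_{r+1} + k\beta K_{r}$ respectively, the collected relation $K_{r+2} = [(1-\beta)k + r+2]K_{r+1} + (r+2)\beta k K_{r}$ is exactly right, and the powers of $n$ work out under $K_{r} = k!\, n^{r+1} e^{k\beta} M_{r}$; the final division by $M_{0}$ (which is independent of $r$) is the same normalization step the paper uses. What your route buys is self-containedness: in effect you re-prove the needed contiguous relation for the particular function $U(r+2, k+r+2, k\beta)$ rather than quoting it, so no special-function identity is invoked; the paper's argument is shorter but treats that identity as a black box. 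A small side benefit of your formulation: written with indices $r+2$, $r+1$, $r$ it holds verbatim for all $r \geq 0$, whereas the paper's intermediate display (with indices $r+1$, $r$, $r-1$) implicitly requires $r \geq 1$ before re-indexing.
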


\begin{proof}
By utilizing the recurrence relation, 
\begin{align*}
U(a, b; z) = (a+1) z U(a+2, b+2; z) + (z-b) U(a+1, b+1; z),
\end{align*}
for the Tricomi confluent hypergeometric functions, (\ref{e6}) becomes
\begin{align*}
n^{2} <L_{n,k}^{\beta}(t), t^{r+1}> &= n [ (1-\beta)k + r + 1] <L_{n,k}^{\beta}(t), t^{r}> 
+ (r+1) \beta k <L_{n,k}^{\beta}(t), t^{r-1}>.  
\end{align*}
Now dividing by $<L_{n,k}^{(\beta)}(t), 1>$ leads to the desired relationship for the polynomials 
$P_{r}(k; \beta)$ given by (\ref{e13}). 
\end{proof}


\begin{lemma}\label{l4}
If the $r$-th order moment with monomials $e_r(t)=t^r,r=0,1,\cdots$ of the operators (\ref{e2}) be defined as
\begin{align*}
T_{n,r}^{\beta}(x):D_n^\beta(e_r,x) = \sum_{k=0}^{\infty}\left(\int_0^\infty L_{n,k}^{(\beta)}(t)\,dt \right)^{-1}
L_{n,k}^{(\beta)}(x)\int_{0}^{\infty}L_{n,k}^{(\beta)}(t)t^r\,dt
\end{align*}
or
\begin{align}
T_{n,r}^{\beta}(x) = \sum_{k=0}^{\infty} P_{r}(k; \beta) \, L_{n,k}^{(\beta)}(x). \label{e14}
\end{align}
The first few are:
\begin{align}
T_{n,0}^{\beta}(x) &= 1, \hspace{10mm}  T_{n,1}^{\beta}(x)=x+\frac{1}{n(1-\beta)}, \nonumber\\
T_{n,2}^{\beta}(x) &= x^2+\frac{4x}{n(1-\beta)}+\frac{2!}{n^2(1-\beta)}, \nonumber\\
T_{n,3}^{\beta}(x) &= x^3+\frac{9 \, x^2}{n(1-\beta)} + \frac{6(3-\beta) \, x}{n^2(1-\beta)^2}+
\frac{3!}{n^3(1-\beta)}, \label{e15} \\
T_{n,4}^{\beta}(x) &= x^{4} + \frac{16 \, x^{3}}{n(1-\beta)} + \frac{12(6-\beta) \, x^{2}}{n^{2}(1-\beta)^{2}}
+ \frac{12(3 \beta^{2} - 6 \beta + 8) \, x}{n^{3}(1-\beta)^{3}} + \frac{4!}{n^{4}(1-\beta)}. \nonumber\\
T_{n,5}^{\beta}(x) &= x^{5} + \frac{25 \, x^{4}}{n(1-\beta)} + \frac{20(10- \beta) \, x^{3}}
{n^{2}(1-\beta)^{2}}  + \frac{120(\beta^{2} - 2 \beta + 5) \, x^{2}}{n^{3}(1-\beta)^{3}} \nonumber\\ 
& \hspace{15mm} + \frac{120(5 - 6\beta + 6\beta^{2} - \beta^{3}) \, x}{n^{4}(1-\beta)^{4}} + \frac{5!}{n^{5}(1-\beta)} 
\nonumber 
\end{align}
\end{lemma}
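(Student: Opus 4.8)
The plan is to exploit the fact that the basis functions $L_{n,k}^{(\beta)}(x)$ weighting the sum in the definition (\ref{e14}) of $T_{n,r}^{\beta}(x)$ are precisely the Jain basis functions of (\ref{e1}). Hence summing any power $k^{m}$ against them reproduces a Jain moment: from $B_{n}^{\beta}(t^{m},x)=\sum_{k=0}^{\infty}L_{n,k}^{(\beta)}(x)(k/n)^{m}$ one reads off the key identity
\begin{align*}
\sum_{k=0}^{\infty} k^{m}\, L_{n,k}^{(\beta)}(x) = n^{m}\, B_{n}^{\beta}(t^{m},x),
\end{align*}
which transfers the sums appearing in (\ref{e14}) directly onto the moments already tabulated in Lemma \ref{l1}.

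First I would take each polynomial $P_{r}(k;\beta)$ from Lemma \ref{l2} and read off its coefficients as a polynomial in $k$, writing $P_{r}(k;\beta)=\sum_{m=0}^{r} c_{m}^{(r)} k^{m}$ with the $c_{m}^{(r)}$ depending only on $n$ and $\beta$. Applying linearity of the sum in (\ref{e14}) together with the key identity then gives
\begin{align*}
T_{n,r}^{\beta}(x) = \sum_{m=0}^{r} c_{m}^{(r)}\, n^{m}\, B_{n}^{\beta}(t^{m},x),
\end{align*}
where the $c_{m}^{(r)}$ come from (\ref{e5}) and the Jain moments $B_{n}^{\beta}(t^{m},x)$ come from (\ref{e3}).

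The remaining step is to substitute the explicit Jain moments of Lemma \ref{l1} and collect terms by powers of $x$. As a sanity check, for $r=1$ the identity yields $T_{n,1}^{\beta}(x)=\frac{1}{n}\left[(1-\beta)\,n\,B_{n}^{\beta}(t,x)+\frac{1}{1-\beta}B_{n}^{\beta}(1,x)\right]=\frac{1}{n}\left[nx+\frac{1}{1-\beta}\right]=x+\frac{1}{n(1-\beta)}$, matching (\ref{e15}). The leading term $x^{r}$ always survives: the top coefficient $c_{r}^{(r)}=(1-\beta)^{r}/n^{r}$ of $P_{r}$, multiplied by $n^{r}$ and by the leading $x^{r}/(1-\beta)^{r}$ of $B_{n}^{\beta}(t^{r},x)$, collapses to $x^{r}$.

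The main obstacle is purely the bookkeeping for the higher cases $r=4,5$: one must carry several terms whose denominators are different powers of $1-\beta$, put them over a common denominator, and check that the $\beta$-dependent numerators reduce to the compact polynomials displayed in (\ref{e15}). No new idea beyond the key identity and Lemmas \ref{l1}--\ref{l2} is needed; the entire difficulty is the organized substitution and simplification of the linear combination above.
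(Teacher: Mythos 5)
Your proposal is correct and is essentially the paper's own argument: the paper likewise expands $P_{r}(k;\beta)$ in powers of $k$, uses $\sum_{k} k^{m} L_{n,k}^{(\beta)}(x) = n^{m} B_{n}^{\beta}(t^{m},x)$ to rewrite $T_{n,r}^{\beta}(x)$ as a linear combination of the Jain moments of Lemma \ref{l1}, and carries out the substitution explicitly for $r=1,2,3$, leaving $r=4,5$ to ``a continuation of this process.'' Your write-up simply makes the key identity and the bookkeeping structure more explicit than the paper does.
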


\begin{proof}
Obviously by (\ref{e2}), we have  $T_{n,0}^{\beta}(x)=1.$  Next by definition of $T_{n,r}^{\beta}(x)$, we have
\begin{align*}
T_{n,r}^{\beta}(x) = \sum_{k=0}^{\infty}\frac{< L_{n,k}^{(\beta)}(t), t^r >}{< L_{n,k}^{(\beta)}(t), 1 >} \, 
L_{n,k}^{(\beta)}(x)
= \sum_{k=0}^{\infty} P_{r}(k; \beta) \, L_{n,k}^{(\beta)}(x).
\end{align*}
Using Lemma \ref{l1} and Lemma \ref{l2}, we have
\begin{align*}
T_{n,1}^{\beta}(x) &= \sum_{k=0}^{\infty}L_{n,k}^{(\beta)}(x)P_{1}(k; \beta) 
= \sum_{k=0}^{\infty}L_{n,k}^{(\beta)}(x) \, \frac{1}{n} \left[ (1-\beta) k + \frac{1}{1-\beta} \right]\\
&= (1-\beta)B_n^{\beta}(t,x)+\frac{1}{n(1-\beta)}B_n^{\beta}(1,x)\\
&= x+\frac{1}{n(1-\beta)}.
\end{align*}
\begin{align*}
T_{n,2}^{\beta}(x) &= \sum_{k=0}^{\infty}L_{n,k}^{(\beta)}(x)P_{2}(k; \beta) 
= \sum_{k=0}^{\infty}L_{n,k}^{(\beta)}(x) \, \frac{1}{n^{2}} \left[ (1-\beta)^{2} k^{2} + 3 k + \frac{2}{1-\beta} \right]\\
&= (1-\beta)^2 \, B_{n}^{\beta}(t^2,x) + \frac{3}{n} \, B_{n}^{\beta}(t,x) + \frac{2}{n^2(1-\beta)} \\
&= x^{2} + \frac{4x}{n(1-\beta)} + \frac{2}{n^2(1-\beta)}.
\end{align*}
\begin{align*}
T_{n,3}^{\beta}(x) &= \sum_{k=0}^{\infty}L_{n,k}^{(\beta)}(x)P_{3}(k; \beta) \\
&= \sum_{k=0}^{\infty}L_{n,k}^{(\beta)}(x) \, \frac{1}{n^3} \left[(1-\beta)^3 k^3 + 6(1-\beta)k^2 + 
\frac{(11-8\beta) \, k}{1-\beta} + \frac{3!}{1-\beta}\right] \\
&= (1-\beta)^{3} B_{n}^{\beta}(t^3,x) + \frac{6(1-\beta)}{n} B_{n}^{\beta}(t^2,x) \\
& \hspace{20mm} + \frac{(11-8\beta)}{n^2(1-\beta)} B_{n}^{\beta}(t,x) + \frac{3!}{n^3(1-\beta)}
B_{n}^{\beta}(1,x) \\
&= x^3 + \frac{9 \, x^2}{n(1-\beta)} + \frac{6(3-\beta) \, x}{n^2(1-\beta)^2} + \frac{3!}{n^3(1-\beta)}.
\end{align*}
A continuation of this process will provide $T_{n,r}^{\beta}(x)$ for cases of $r \geq 4$.
\end{proof}

\begin{lemma}\label{l5}
For $r \geq 1$ the polynomials $T_{n,r}^{\beta}(x)$ satisfy the relation
\begin{align}
T_{n,r}^{\beta}(x) = \left( x + \frac{2r-1}{n(1-\beta)} \right) \, T_{n,r-1}^{\beta}(x) - \sum_{j=0}^{r-2}
\frac{(-1)^{j} \, A_{j}^{r-2}}{n^{j+2} (1-\beta)^{j+2}} \, T_{n,r-j-2}^{\beta}(x)    \label{e16}
\end{align}
where the first few coefficients $A_{j}^{r}$ are given by
\begin{align*}
A_{0}^{0} &= 1 + 2 \beta \\
A_{0}^{1} &= 4 + 4 \beta  \hspace{10mm} A_{1}^{1} = 2 \beta + 6 \beta^{2} \\
A_{0}^{2} &= 9 + 6 \beta  \hspace{10mm} A_{1}^{2} = 6 \beta + 30 \beta^{2} \hspace{10mm}
A_{2}^{2} = 12 \beta^{2} + 24 \beta^{3} \\
A_{0}^{3} &= 16 + 8 \beta  \hspace{8mm} A_{1}^{3} = 12 \beta + 84 \beta^{2}  \hspace{8mm}
A_{2}^{3} = 60 \beta^{2} + 96 \beta^{3}  \\
& \hspace{20mm} A_{3}^{3} = -12 \beta^{2} + 48 \beta^{3} + 120 \beta^{4}
\end{align*}
\end{lemma}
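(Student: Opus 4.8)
The plan is to reduce the statement to a clean re-expansion in a polynomial basis, after first stripping off the dependence on $n$. By Lemma \ref{l4} and its proof, writing $T_{n,r}^{\beta}(x)=\sum_{k}P_{r}(k;\beta)\,L_{n,k}^{(\beta)}(x)$ with $P_{r}(k;\beta)=n^{-r}\sum_{p}\pi_{r,p}(\beta)\,k^{p}$, and using $\sum_{k}k^{p}L_{n,k}^{(\beta)}(x)=n^{p}B_{n}^{\beta}(t^{p},x)$ together with the fact (visible in Lemma \ref{l1}) that the coefficient of $x^{p-i}$ in $B_{n}^{\beta}(t^{p},x)$ carries the exact power $n^{-i}$, one checks that the coefficient of $x^{r-i}$ in $T_{n,r}^{\beta}(x)$ is $n^{-i}$ times a function of $\beta$ alone. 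Hence $\tilde{T}_{r}(y;\beta):=n^{r}\,T_{n,r}^{\beta}(y/n)$ is a \emph{monic} polynomial of degree $r$ in $y$ that is independent of $n$. Substituting $x=y/n$ in (\ref{e16}) and multiplying through by $n^{r}$ converts every term into an $n$-free quantity, so it is equivalent to prove
\[
\tilde{T}_{r}(y;\beta)=\Bigl(y+\tfrac{2r-1}{1-\beta}\Bigr)\tilde{T}_{r-1}(y;\beta)-\sum_{j=0}^{r-2}\frac{(-1)^{j}A_{j}^{\,r-2}}{(1-\beta)^{j+2}}\,\tilde{T}_{r-j-2}(y;\beta),
\]
which simultaneously displays that $A_{j}^{\,r-2}$ cannot depend on $n$.

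Since the $\tilde{T}_{m}(\,\cdot\,;\beta)$ are monic of strictly increasing degree, $\{\tilde{T}_{0},\dots,\tilde{T}_{r}\}$ is a basis of the polynomials of degree $\le r$, and every such polynomial has a unique expansion in it; this is the mechanism that produces the recurrence. First I would pin down the top two coefficients of $\tilde{T}_{r}$: it is monic, and a short computation from the leading terms of $P_{r}(k;\beta)$ and of $B_{n}^{\beta}(t^{r},x),\,B_{n}^{\beta}(t^{r-1},x)$ shows the coefficient of $y^{r-1}$ equals $\frac{r^{2}}{1-\beta}$ (the two contributions $\binom{r}{2}$ and $\binom{r+1}{2}$ sum to $r^{2}$). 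Consequently
\[
Q_{r}(y):=\tilde{T}_{r}(y;\beta)-\Bigl(y+\tfrac{2r-1}{1-\beta}\Bigr)\tilde{T}_{r-1}(y;\beta)
\]
has vanishing $y^{r}$ coefficient by monic cancellation, and vanishing $y^{r-1}$ coefficient precisely because $\frac{(r-1)^{2}}{1-\beta}+\frac{2r-1}{1-\beta}=\frac{r^{2}}{1-\beta}$. Thus $\deg Q_{r}\le r-2$, which is exactly what forces the particular shift $\frac{2r-1}{n(1-\beta)}$ appearing in (\ref{e16}).

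It then remains to expand $Q_{r}$ in $\{\tilde{T}_{0},\dots,\tilde{T}_{r-2}\}$ and to \emph{define} $A_{j}^{\,r-2}$ through the requirement that the coefficient of $\tilde{T}_{r-j-2}$ be $\frac{(-1)^{j+1}A_{j}^{\,r-2}}{(1-\beta)^{j+2}}$. Existence and uniqueness of this expansion are automatic from the basis property, so the genuine content of the lemma is the structural claim that, once the factor $(1-\beta)^{-(j+2)}$ has been extracted, the remaining quantity $A_{j}^{\,r-2}$ is an honest \emph{polynomial} in $\beta$. The hard part will be this $(1-\beta)$-valuation bookkeeping: the raw expansion coefficients are a priori rational in $\beta$ with uncontrolled powers of $1-\beta$ in the denominator, and one must show these powers never exceed $j+2$, which happens only through the delicate cancellations already visible when one forms $Q_{r}$. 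I would either track these cancellations inductively through the three-term structure above, or---matching the computational spirit of Lemmas \ref{l1}--\ref{l4}---verify them directly by taking $r=2,3,4,5$, inserting the explicit $\tilde{T}_{m}$ read off from (\ref{e15}), and checking that the expansion reproduces $A_{0}^{0}=1+2\beta$; then $A_{0}^{1}=4+4\beta,\ A_{1}^{1}=2\beta+6\beta^{2}$; then $A_{0}^{2},A_{1}^{2},A_{2}^{2}$; and finally $A_{0}^{3},\dots,A_{3}^{3}$ exactly as tabulated, thereby confirming both the form of (\ref{e16}) and the listed coefficients.
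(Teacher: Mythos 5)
Your proposal is correct in substance, and the small pieces I can check all work out (e.g.\ for $r=2$ your mechanism gives $Q_{2}=\tilde{T}_{2}-\bigl(y+\tfrac{3}{1-\beta}\bigr)\tilde{T}_{1}=-\tfrac{1+2\beta}{(1-\beta)^{2}}$, recovering $A_{0}^{0}=1+2\beta$), but it follows a genuinely different route from the paper's. The paper derives (\ref{e16}) from Lemma \ref{l3}: the contiguous relation for Tricomi's function yields the three-term recurrence (\ref{e13}) for the $P_{r}(k;\beta)$, this is summed against $L_{n,k}^{(\beta)}(x)$, and the resulting sums of the form $\sum_{k}k\,P_{r}(k;\beta)L_{n,k}^{(\beta)}(x)$ are reorganized, via the explicit forms (\ref{e5}), into linear combinations of the $T_{n,j}^{\beta}$; the coefficients $A_{j}^{r}$ emerge from that reorganization. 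You bypass Lemma \ref{l3} entirely: you normalize to $\tilde{T}_{r}(y;\beta)=n^{r}T_{n,r}^{\beta}(y/n)$ to strip out $n$, use monicity to get a graded basis, force the shift $\tfrac{2r-1}{n(1-\beta)}$ by matching the top two coefficients (the identity $\binom{r}{2}+\binom{r+1}{2}=r^{2}$), and then let the unique expansion of $Q_{r}$ in $\{\tilde{T}_{0},\dots,\tilde{T}_{r-2}\}$ \emph{define} the $A_{j}^{r-2}$. This buys two things the paper never makes explicit: a structural explanation of why the shift is exactly $\tfrac{2r-1}{n(1-\beta)}$, and a clean proof that the $A_{j}$'s are independent of $n$; the paper's route, in exchange, is generative rather than verificational --- the recurrence falls out of the special-function identity instead of being posited and checked. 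One caveat on your side: your two general-$r$ inputs (that the coefficient of $x^{r-i}$ in $T_{n,r}^{\beta}$ carries exactly $n^{-i}$, and that the $y^{r-1}$ coefficient of $\tilde{T}_{r}$ is $r^{2}/(1-\beta)$) rest on sub-leading coefficient patterns of $P_{r}(k;\beta)$ and $B_{n}^{\beta}(t^{p},x)$ that Lemmas \ref{l1} and \ref{l2} only tabulate up to order five; for arbitrary $r$ these would need an inductive argument, and the natural engine for that induction is precisely the recurrence (\ref{e13}) you chose to avoid. Since the lemma's explicit content --- the table of $A_{j}^{r}$ for $r\le 3$, i.e.\ the recurrence through $T_{n,5}^{\beta}$ --- is fully covered by your finite verification plan, your proof sits at the same level of rigor as the paper's own, which likewise ends with ``this can be verified.''
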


\begin{proof}
Making use of (\ref{e13}) and (\ref{e14}) leads to the relation
\begin{align*}
n^{2} T_{n,r}^{\beta}(x) - n (r+1) T_{n,r}^{\beta}(x) = \sum_{k=0}^{\infty} k \left[ n (1-\beta) P_{r}(k; \beta)
+ (r+1) \beta P_{r-1}(k; \beta) \right] L_{n,k}^{(\beta)}(x).
\end{align*}
Now making use of (\ref{e5}) the summation can be reformed into the desired relation. This can be verified
by considering $T_{n,r}^{\beta}(x)$ as a linear combination of $T_{n,j}^{\beta}(x)$ for $0 \leq j \leq r-1$. 
\end{proof}


\begin{remark} \label{r1}
If we denote the central moment as $\mu_{n,r}^\beta(x)=D_{n}^{\beta}((t-x)^r,x)$, then
\begin{align}
\mu_{n,1}^\beta(x) &= \frac{1}{n(1-\beta)}, \hspace{10mm}  \mu_{n,2}^\beta(x) = \frac{2x}{n(1-\beta)}
+\frac{2!}{n^2(1-\beta)} \nonumber\\
\mu_{n,3}^\beta(x) &= \frac{12 \, x}{n^2(1-\beta)^2}+\frac{3!}{n^3(1-\beta)},  \label{e17} \\
\mu_{n,4}^\beta(x) &= \frac{12 \, x^2}{n^2(1-\beta)^2} + \frac{12(6 -2 \beta + \beta^2) \, x}{n^3(1-\beta)^2} 
+ \frac{4!}{n^4(1-\beta)}. \nonumber
\end{align}
In general using the similar approach, one can show that:
$$\mu_{n,r}^\beta(x)=O\left(n^{-[(r+1)/2]}\right),$$
where $[\alpha]$ denotes the integral part of $\alpha.$
\end{remark}

\section{Direct Estimates}
In this section, we establish the following direct result:
\begin{Proposition}\label{t1}
Let $f$ be a continuous function on $[0,\infty)$ for $n\to \infty$, the sequence $\{D_{n}^{\beta}(f,x)\}$ converges
uniformly to $f(x)$ in $[a,b]\subset[0,\infty).$
\end{Proposition}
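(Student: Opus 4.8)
The plan is to invoke the Bohman--Korovkin theorem on the compact interval $[a,b]$. First I would record that each $D_{n}^{\beta}$ is a positive linear operator: linearity is immediate from the defining formula (\ref{e2}), while positivity follows because the basis functions $L_{n,k}^{(\beta)}(x)$ are nonnegative on $[0,\infty)$ and the normalizing weights $\left(\int_{0}^{\infty} L_{n,k}^{(\beta)}(t)\,dt\right)^{-1}$ are strictly positive, so that $f \geq 0$ forces $D_{n}^{\beta}(f,x) \geq 0$. The Korovkin theorem then reduces the entire problem to verifying that $D_{n}^{\beta}(e_{i},\cdot) \to e_{i}$ uniformly on $[a,b]$ for the three test monomials $e_{0}, e_{1}, e_{2}$.

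Second, I would read off these three limits directly from Lemma \ref{l4}, since the moments $T_{n,r}^{\beta}(x) = D_{n}^{\beta}(e_{r},x)$ are exactly what that lemma computes. For $e_{0}$ we have the exact identity $T_{n,0}^{\beta}(x) = 1$, so nothing is needed. For $e_{1}$, the formula $T_{n,1}^{\beta}(x) = x + \frac{1}{n(1-\beta)}$ gives $\sup_{x \in [a,b]} |D_{n}^{\beta}(e_{1},x) - x| = \frac{1}{n(1-\beta)} \to 0$, a bound independent of $x$. For $e_{2}$, from $T_{n,2}^{\beta}(x) = x^{2} + \frac{4x}{n(1-\beta)} + \frac{2!}{n^{2}(1-\beta)}$ one obtains $|D_{n}^{\beta}(e_{2},x) - x^{2}| \leq \frac{4b}{n(1-\beta)} + \frac{2}{n^{2}(1-\beta)}$ for every $x \in [a,b]$, which again tends to $0$ as $n \to \infty$. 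Hence all three Korovkin conditions hold uniformly on $[a,b]$.

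Third, since $f$ is continuous on $[0,\infty)$, its restriction to $[a,b]$ is continuous, and applying the Bohman--Korovkin theorem to that restriction yields $D_{n}^{\beta}(f,\cdot) \to f$ uniformly on $[a,b]$, which is precisely the assertion.

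The step requiring the most care is not the Korovkin application itself but the preliminary point that the operators are well defined on the relevant function class: because the domain $[0,\infty)$ is unbounded, one must ensure the defining integrals $\int_{0}^{\infty} L_{n,k}^{(\beta)}(t) f(t)\,dt$ converge, which is guaranteed once $f$ is restricted to functions of suitable (e.g.\ at most polynomial) growth. Granting this, the essential analytic content — that the first and second moment errors are polynomials in $x$ whose coefficients vanish as $n \to \infty$ and are therefore uniformly small on the bounded set $[a,b]$ — is supplied entirely by Lemma \ref{l4}, so the remainder of the argument is routine.
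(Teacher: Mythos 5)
Your proposal is correct and follows essentially the same route as the paper: verify the three Korovkin test functions $e_0,e_1,e_2$ via the explicit moment formulas of Lemma \ref{l4} and invoke the Bohman--Korovkin theorem on the compact interval $[a,b]$. The extra details you supply (positivity and linearity of the operators, explicit uniform bounds on $[a,b]$, well-definedness of the integrals) are sound elaborations of what the paper leaves implicit.
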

\begin{proof}
For sufficiently large $n$, it is obvious from Lemma \ref{l4} that $D_{n}^{\beta}(e_0,x),\,\,D_{n}^{\beta}(e_1,x)$ and
$D_{n}^{\beta}(e_2,x)$ converges uniformly to $1,\,\,x$ and $x^2$ respectively on every compact subset of $[0,\infty).$
Thus the required result follows from Bohman-Korovkin theorem.
\end{proof}

\begin{theorem}\label{t2} Let $f$ be a bounded integrable function on $[0,\infty)$ and has second derivative at a point
$x\in [0,\infty)$, then $$\lim_{n\to \infty} n[D_{n}^{\beta}(f,x)-f(x)]=\frac{1}{1-\beta}f^\prime(x)+\frac{x}{1-\beta}
f^{\prime\prime}(x).$$
\end{theorem}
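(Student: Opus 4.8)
The plan is to follow the classical Voronovskaja scheme: expand $f$ about the point $x$ by Taylor's theorem to second order with a Peano remainder, apply the positive linear operator $D_{n}^{\beta}$ termwise, identify the limit from the central moments recorded in Remark \ref{r1}, and control the remainder using the fourth central moment. Since $f$ possesses a second derivative at $x$, Taylor's theorem gives
\[ f(t) = f(x) + f'(x)(t-x) + \tfrac{1}{2}f''(x)(t-x)^2 + h(t,x)(t-x)^2, \]
where $h(t,x)\to 0$ as $t\to x$. I would treat $h(t,x)(t-x)^2$ as the error term and reduce the whole problem to estimating its image under $D_{n}^{\beta}$.

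Applying $D_{n}^{\beta}$ and using $D_{n}^{\beta}(e_0,x)=1$ from Lemma \ref{l4} together with the linearity and positivity of the operator, I obtain
\[ D_{n}^{\beta}(f,x) - f(x) = f'(x)\,\mu_{n,1}^{\beta}(x) + \tfrac{1}{2}f''(x)\,\mu_{n,2}^{\beta}(x) + D_{n}^{\beta}\!\left(h(t,x)(t-x)^2,\,x\right). \]
Multiplying by $n$ and inserting the explicit values $\mu_{n,1}^{\beta}(x)=\frac{1}{n(1-\beta)}$ and $\mu_{n,2}^{\beta}(x)=\frac{2x}{n(1-\beta)}+\frac{2}{n^{2}(1-\beta)}$ from (\ref{e17}), the first two terms converge as $n\to\infty$ to
\[ \frac{f'(x)}{1-\beta} + \tfrac{1}{2}f''(x)\cdot\frac{2x}{1-\beta} = \frac{1}{1-\beta}f'(x) + \frac{x}{1-\beta}f''(x), \]
which is exactly the claimed right-hand side. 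Thus everything hinges on showing that the remaining term is negligible.

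The main obstacle is precisely to prove that $n\,D_{n}^{\beta}\!\left(h(t,x)(t-x)^2,x\right)\to 0$. Here I would fix $\varepsilon>0$ and choose $\delta>0$ so that $|h(t,x)|<\varepsilon$ whenever $|t-x|<\delta$, then split the operator into the regions $|t-x|<\delta$ and $|t-x|\geq\delta$. On the near region, positivity gives the bound $\varepsilon\,n\,\mu_{n,2}^{\beta}(x)$, which tends to $\varepsilon\,\frac{2x}{1-\beta}$. On the far region, the boundedness of $f$ (say $|f|\leq M$) lets me estimate $|h(t,x)(t-x)^2|=\bigl|f(t)-f(x)-f'(x)(t-x)-\tfrac{1}{2}f''(x)(t-x)^2\bigr|$ by a constant multiple of $(t-x)^4$, using $1\leq (t-x)^4/\delta^4$ and the analogous domination of the lower powers of $|t-x|$ when $|t-x|\geq\delta$; hence this contribution is at most $C\,n\,\mu_{n,4}^{\beta}(x)$. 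Since Remark \ref{r1} gives $\mu_{n,4}^{\beta}(x)=O(n^{-2})$, this term is $O(n^{-1})$ and vanishes. Letting $\varepsilon\to 0$ then forces $n\,D_{n}^{\beta}\!\left(h(t,x)(t-x)^2,x\right)\to 0$, and combining with the computation above completes the proof.
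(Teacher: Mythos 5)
Your proof is correct, and it reaches the conclusion by a genuinely different route than the paper for the one step that matters. Both arguments share the same skeleton: second-order Taylor expansion with Peano remainder, identification of the limit from $\mu_{n,1}^{\beta}(x)$ and $\mu_{n,2}^{\beta}(x)$ in Remark \ref{r1}, and reduction to showing $n\,D_{n}^{\beta}\bigl(h(t,x)(t-x)^{2},x\bigr)\to 0$. The paper handles this last step by the Cauchy--Schwarz inequality,
\[
D_{n}^{\beta}\bigl(r(t,x)(t-x)^{2},x\bigr)\leq \sqrt{D_{n}^{\beta}\bigl(r^{2}(t,x),x\bigr)}\,\sqrt{D_{n}^{\beta}\bigl((t-x)^{4},x\bigr)},
\]
then appeals to the Korovkin-type convergence $D_{n}^{\beta}\bigl(r^{2}(t,x),x\bigr)\to r^{2}(x,x)=0$ together with $\mu_{n,4}^{\beta}(x)=O(n^{-2})$. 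You instead use the classical $\varepsilon$--$\delta$ splitting: on $|t-x|<\delta$ you get $\varepsilon\, n\,\mu_{n,2}^{\beta}(x)\to \varepsilon\,\tfrac{2x}{1-\beta}$, and on $|t-x|\geq\delta$ you dominate the remainder by $C(t-x)^{4}$ (absorbing the constant, linear, and quadratic terms via powers of $1/\delta$), giving $C\,n\,\mu_{n,4}^{\beta}(x)=O(n^{-1})$. The paper's route is shorter, but it quietly requires $r^{2}(t,\cdot)$ to lie in a class where the uniform convergence of Proposition \ref{t1} applies (the paper asserts $r^{2}(t,x)\in C_{2}^{\ast}[0,\infty)$), which is not automatic when $f$ is merely bounded, integrable, and twice differentiable at the single point $x$: the Peano remainder need not be continuous away from $x$. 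Your splitting argument uses only boundedness of $f$, the pointwise condition $h(t,x)\to 0$ as $t\to x$, and the moment orders from Remark \ref{r1}, so it is faithful to the stated hypotheses and is, in that respect, the more rigorous of the two proofs; what it costs is a slightly longer computation in place of one application of Cauchy--Schwarz.
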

\begin{proof}
By the Taylor's expansion of $f$, we have
\begin{equation}\label{e18}
f(t)=f(x)+f^{\prime}(x)(t-x)+\frac{1}{2}f^{\prime\prime}(x)(t-x)^{2}
+r(t,x)(t-x)^{2},
\end{equation}
where $r(t,x)$ is the remainder term  and $\displaystyle\lim_{n\rightarrow\infty}r(t,x)=0.$
Operating $D_{n}^{\beta}$ to the equation (\ref{e18}), we obtain
\begin{align*}
D_{n}^{\beta}(f,x)-f(x) &= D_{n}^{\beta}(t-x,x)f^{\prime}(x)+ D_{n}^{\beta}\left(\left(t-x\right)^{2},x\right)
\frac{f^{\prime\prime}(x)}{2} \\
& \hspace{10mm} + D_{n}^{\beta}\left( r\left( t,x\right)  \left(  t-x\right)^{2},x\right)
\end{align*}
Using Cauchy-Schwarz inequality, we have
\begin{equation}\label{e19}
D_{n}^{\beta}\left(r\left(t,x\right)\left(t-x\right)^{2},x\right)\leq \sqrt{D_{n}^{\beta}\left(r^{2}\left(t,x\right)
,x\right)}\sqrt{D_{n}^{\beta}\left(\left(t-x\right)^{4},x\right)}.
\end{equation}
As $r^{2}\left(x,x\right)=0$ and $r^{2}\left(t,x\right) \in C_{2}^{\ast}[0,\infty)$, we have
\begin{equation}\label{e20}
\lim_{n\rightarrow\infty}D_{n}^{\beta}\left(r^{2}\left(t,x\right),x\right)=r^{2}\left(x,x\right) = 0
\end{equation}
uniformly with respect to $x\in\left[0,A\right].$ Now from (\ref{e19}),
(\ref{e20}) and from Remark \ref{r1}, we get
\begin{align*}
\lim_{n\rightarrow\infty} nD_{n}^{\beta}\left(r\left(t,x\right)\left(t-x\right)^{2},x\right)=0.
\end{align*}
Thus
\begin{align*}
\lim_{n\rightarrow\infty}n\left(D_{n}^{\beta}(f,x)-f(x)\right)
&= \lim_{n\rightarrow\infty}n\biggl[D_{n}^{\beta}(t-x,x)f^{\prime}(x)+\frac{1}{2}f^{\prime\prime
}(x)D_{n}^{\beta}(\left(t-x\right)^{2},x) \\
& \hspace{25mm} + D_{n}^{\beta}(r\left(t,x\right)\left(t-x\right)^{2},x)\biggr] \\
&= \frac{1}{1-\beta}f^\prime(x)+\frac{x}{1-\beta}f^{\prime\prime}(x).
\end{align*}
\end{proof}

By $C_B[0,\infty)$, we denote the class on real valued continuous bounded functions $f(x)$ for $x\in[0,\infty)$ with
the norm $||f||=\sup_{x\in[0,\infty)}|f(x)|$. For $f\in C_B[0,\infty)$ and $\delta >0$ the $m$-th order modulus
of continuity is defined as
$$\omega_m(f,\delta)=\sup_{0\le h\le \delta}\sup_{x\in[0,\infty)}|\Delta^m_hf(x)|,$$
where $\Delta$ is the forward difference. In case $m=1$ we mean the usual modulus of continuity denoted by
$\omega(f,\delta).$
The Peetre's $K$-functional is defined as
$$K_2(f,\delta)=\inf_{g\in C_B^2[0,\infty)}\left\{||f-g||+\delta||g^{\prime\prime}||:g\in C_B^2[0,\infty)\right\},$$
where $$C_B^2[0,\infty)=\{g\in C_B[0,\infty): g^\prime, g^{\prime\prime}\in C_B[0,\infty)\}.$$

\begin{theorem}\label{t3} Let $f\in C_B[0,\infty)$ and $\beta>0,$ then
\begin{align*}
|D_{n}^{\beta}(f,x)-f(x)| &\leq C \omega_2\left(f,\sqrt{\left[\frac{2x}{n(1-\beta)}+\frac{2}{n^2(1-\beta)}
+ \frac{1}{n^2(1-\beta)^2}\right]}\right) \\
& \hspace{35mm} + \omega\bigg(f,\frac{1}{n(1-\beta)}\bigg)
\end{align*}
where $C$ is a positive constant.
\end{theorem}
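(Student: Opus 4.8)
The plan is to follow the standard Peetre $K$-functional technique, adapted to the fact that $D_{n}^{\beta}$ does not reproduce linear functions: by Lemma \ref{l4} we have $D_{n}^{\beta}(e_{1},x)=x+\tfrac{1}{n(1-\beta)}\neq x$. To remove this first-order bias I would first introduce the auxiliary operator
\[
\widetilde{D}_{n}^{\beta}(f,x) = D_{n}^{\beta}(f,x) - f\!\left(x + \frac{1}{n(1-\beta)}\right) + f(x).
\]
Using $T_{n,0}^{\beta}(x)=1$ and $T_{n,1}^{\beta}(x)=x+\tfrac{1}{n(1-\beta)}$ from Lemma \ref{l4}, one checks directly that $\widetilde{D}_{n}^{\beta}(1,x)=1$ and $\widetilde{D}_{n}^{\beta}(t-x,x)=0$, so $\widetilde{D}_{n}^{\beta}$ preserves linear functions.

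Next, for $g\in C_{B}^{2}[0,\infty)$ I would apply $\widetilde{D}_{n}^{\beta}$ to the Taylor expansion with integral remainder $g(t)=g(x)+(t-x)g'(x)+\int_{x}^{t}(t-u)g''(u)\,du$. Since $\widetilde{D}_{n}^{\beta}$ reproduces the constant and linear terms, only the remainder survives, and writing out the definition of $\widetilde{D}_{n}^{\beta}$ on it (the evaluation at $x$ vanishes) while estimating each integral crudely by $\tfrac12\|g''\|(\cdot)^{2}$ gives, via the triangle inequality,
\[
|\widetilde{D}_{n}^{\beta}(g,x)-g(x)| \leq \frac{\|g''\|}{2}\left[ D_{n}^{\beta}((t-x)^{2},x) + \frac{1}{n^{2}(1-\beta)^{2}}\right] = \frac{\|g''\|}{2}\,\delta_{n}^{2}(x),
\]
where $\delta_{n}^{2}(x)=\tfrac{2x}{n(1-\beta)}+\tfrac{2}{n^{2}(1-\beta)}+\tfrac{1}{n^{2}(1-\beta)^{2}}$ after inserting $\mu_{n,2}^{\beta}(x)$ from Remark \ref{r1}. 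I would emphasize that the \emph{plus} sign in front of the last term, rather than the minus sign one gets from computing $\widetilde{D}_{n}^{\beta}((t-x)^{2},x)$ directly, is precisely what the triangle inequality on the two remainder integrals produces; matching this to the stated $\delta_{n}^{2}(x)$ is the one step that must be handled with care.

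Then for arbitrary $f\in C_{B}[0,\infty)$ and any $g\in C_{B}^{2}[0,\infty)$ I would split
\[
\widetilde{D}_{n}^{\beta}(f,x)-f(x) = \widetilde{D}_{n}^{\beta}(f-g,x) + \bigl(\widetilde{D}_{n}^{\beta}(g,x)-g(x)\bigr) + \bigl(g(x)-f(x)\bigr),
\]
use that $\widetilde{D}_{n}^{\beta}$ is bounded with $|\widetilde{D}_{n}^{\beta}(h,x)|\leq 3\|h\|$ (a positive contraction plus two point evaluations), and obtain $|\widetilde{D}_{n}^{\beta}(f,x)-f(x)|\leq 4\|f-g\|+\tfrac12\delta_{n}^{2}(x)\|g''\|$. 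Taking the infimum over $g$ yields $4K_{2}\bigl(f,\tfrac18\delta_{n}^{2}(x)\bigr)$, and the classical equivalence $K_{2}(f,\delta)\leq C\,\omega_{2}(f,\sqrt{\delta})$ together with monotonicity of $\omega_{2}$ bounds this by $C\,\omega_{2}(f,\delta_{n}(x))$. Finally, undoing the auxiliary operator through $D_{n}^{\beta}(f,x)-f(x)=\bigl(\widetilde{D}_{n}^{\beta}(f,x)-f(x)\bigr)+\bigl(f(x+\tfrac{1}{n(1-\beta)})-f(x)\bigr)$ and estimating the last difference by $\omega\bigl(f,\tfrac{1}{n(1-\beta)}\bigr)$ produces exactly the claimed inequality. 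The only genuinely nontrivial external ingredient is the $K_{2}$--$\omega_{2}$ equivalence, a standard DeVore--Lorentz type result that I would simply cite.
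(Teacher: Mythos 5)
Your proposal is correct and follows essentially the same route as the paper: the same auxiliary operator $\bar{D}_{n}^{\beta}(f,x)=D_{n}^{\beta}(f,x)-f\bigl(x+\tfrac{1}{n(1-\beta)}\bigr)+f(x)$ preserving linear functions, the same Taylor-with-integral-remainder estimate leading to the same $\delta_{n}$, the same boundedness constant $3\|f\|$, and the same appeal to the DeVore--Lorentz inequality $K_{2}(f,\delta)\leq C\,\omega_{2}(f,\sqrt{\delta})$. The only differences are cosmetic: you retain the factor $\tfrac12$ in the remainder bound (the paper uses the cruder bound without it) and you add the $\omega\bigl(f,\tfrac{1}{n(1-\beta)}\bigr)$ term at the end rather than inside the main inequality chain, neither of which changes the argument.
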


\begin{proof}
We introduce the auxiliary operators $\bar{D}_{n}^{\beta}:C_B[0,\infty)\to C_B[0,\infty)$ as follows
\begin{align}\label{e21}
\bar{D}_{n}^{\beta}(f,x) = D_{n}^{\beta}(f,x)-f\bigg(x+\frac{1}{n(1-\beta)}\bigg)+f(x),
\end{align}
These operators are linear and preserves the linear functions in view of Lemma \ref{l4}. Let $g\in C_B^2[0,\infty)$
and $x, \, t \in[0,\infty).$ By Taylor's expansion
\begin{align*}
g(t) = g(x)+(t-x)g^\prime(x)+\int_{x}^{t}(t-u)g^{\prime\prime}(u)du,
\end{align*}
we have
\begin{align}
|\bar{D}_{n}^{\beta}(g,x)-g(x)| &\leq \bar{D}_{n}^{\beta}\bigg(\bigg|\int_{x}^{t}(t-u)g^{\prime\prime}(u)du\bigg|,x\bigg)
\nonumber\\
&\leq D_{n}^{\beta}\bigg(\bigg|\int_{x}^{t}(t-u)g^{\prime\prime}(u)du\bigg|,x\bigg) \nonumber\\
& \hspace{20mm} + \bigg|\int_{x}^{x+\frac{1}{n(1-\beta)}}\bigg(x+\frac{1}{n(1-\beta)}-u\bigg)g^{\prime\prime}(u)du\bigg|
\nonumber\\
&\leq D_{n}^{\beta}((t-x)^2,x)\|g^{\prime\prime}\|+\bigg|\int_{x}^{x+\frac{1}{n(1-\beta)}}
\bigg(\frac{1}{n(1-\beta)}\bigg)du\bigg|\|g^{\prime\prime}\|\nonumber
\end{align}
Next, using Remark \ref{r1}, we have
\begin{align}
|\bar{D}_{n}^{\beta}(g,x)-g(x)| &\leq \bigg[ D_{n}^{\beta}((t-x)^2,x)+\bigg(\frac{1}{n(1-\beta)}\bigg)^2\bigg]\|
g^{\prime\prime}\|\nonumber\\
&\leq \bigg[ D_{n}^{\beta}((t-x)^2,x)+\bigg(\frac{1}{n(1-\beta)}\bigg)^2\bigg]\|g^{\prime\prime}\|\nonumber \\
&\leq \left[\frac{2x}{n(1-\beta)}+\frac{2}{n^2(1-\beta)}+\frac{1}{n^2(1-\beta)^2}\right]\|g^{\prime\prime}\|=\delta_n\|
g^{\prime\prime}\|.\label{e22}
\end{align}
Since
\begin{align*}
|D_{n}^{\beta}(f,x)|\le \sum_{k=0}^{\infty}\left(\int_0^\infty L_{n,k}^{(\beta)}(t)\,dt \right)^{-1}L_{n,k}^{(\beta)}(x)
\int_{0}^{\infty}L_{n,k}^{(\beta)}(t)|f(t)|\,dt\le\|f\|.
\end{align*}
Now by (\ref{e19}), we have
\begin{align}\label{e23}
|||\bar{D}_{n}^{\beta}(f,x)||\le |||D_{n}^{\beta}(f,x)||+2||f||\le 3||f||, \hspace{5mm} f \in C_B[0,\infty).
\end{align}
Using (\ref{e21}), (\ref{e22}) and (\ref{e23}), we have
\begin{align*}
|D_{n}^{\beta}(f,x)-f(x)| &\leq  |\bar{D}_{n}^{\beta}(f-g,x)-(f-g)(x)|+|\bar{D}_{n}^{\beta}(g,x)-g(x)|\\
& \hspace{25mm} + \bigg|f\bigg(x+\frac{1}{n(1-\beta)}\bigg)-f(x)\bigg|\\
&\leq 4\|f-g\|+\delta_n\|g^{\prime\prime}\|+\bigg|f\bigg(x+\frac{1}{n(1-\beta)}\bigg)-f(x)\bigg|\\
&\leq C\left\{\|f-g\|+\delta_n\|g^{\prime\prime}\|\right\}+\omega\left(f,\frac{1}{n(1-\beta)}\right).
\end{align*}
Taking infimum over all $g\in C_B^2[0,\infty)$, and using the inequality $K_2(f,\delta) \leq
C\omega_2(f,\sqrt{\delta})$, $\delta >0$ due to \cite{dl}, we get the desired assertion.
\end{proof}


\vspace{4mm}
\begin{center}
\bf{Acknowledgement}
\end{center}

The authors would like to thank Dr. Ali Aral for checking many of the results presented, valueable comments
and verification that \cite{af} contains minor errors. 




\begin{thebibliography}{}
\bibitem{pna} P. N. Agrawal and V. Gupta, \textit{Simultaneous approximation by linear combination of modified 
Bernstein polynomials}, Bull Greek Math. Soc. 39 (1989), 29-43.
\bibitem{dl} R. A. DeVore and G. G. Lorentz, \textit{Constructive Approximation}, Springer, Berlin (1993).
\bibitem{der} M. M. Derriennic, \textit{Sur l'approximation de functionsintegrable sur $[0,1]$ par des polynomes de 
Bernstein modifies}, J. Approx. Theory {\bf 31} (1981), 323-343.
\bibitem{gon1} H. H. Gonska and D.-X. Zhou, \textit{Local smoothness of functions and Bernstein-Durrmeyer operators}, 
Comput. Math Appl. 30 (3-6) (1995), 83-101.
\bibitem{gon2} H. H. Gonska and Xin-long Zhou, \textit{A global inverse theorem on simultaneous approximation by 
Bernstein-Durrmeyer operators}, J. Approx. Theory 67 (1991), 284302.
\bibitem{Dur} J. L. Durrmeyer, \textit{Une formule d' inversion de la Transformee de Laplace: Applications a la
Theorie des Moments}, These de 3e Cycle, Faculte des Sciences de l' Universite de Paris, 1967.
\bibitem{af} A. Farca\c{s}, \textit{An asymptotic formula for Jain's operators}, Stud. Univ. babes Bolyai Math. 57
(2012), 511-517.
\bibitem{vgrp} V. Gupta and R. P. Agarwal, Convergence estimates in Approximation Theory, Springer 2014.
\bibitem{vg1} V. Gupta, R. P. Agarwal and D. K. Verma, \textit{Approximation for a new sequence of summation-integral
type operators}, Advances in Mathematical Sciences and Applications 23 (1) (2013), 35-42.
\bibitem{J} G. C. Jain, \textit{Approximation of functions by a new class of linear operators}, J. Austral. Math. Soc.
13 (3) (1972), 271-276.
\bibitem{st} S. Tarabie, On Jain-Beta linear operators, Appl. Math. Inf. Sci. 6 (2) (2012), 213-216.
\bibitem{UR} S. Umar and Q. Razi, \textit{Approximation of function by a generalized Sz\'{a}sz operators},
Communications de la Fac. Sci. L'Univ D'Ankara 34 (1985), 45-52.
\end{thebibliography}
\end{document}